\newtheorem{theorem}{Theorem}[section]
\newtheorem{lemma}[theorem]{Lemma}
\newtheorem{cor}[theorem]{Corollary}
\newtheorem{remark}[theorem]{Remark}
\newtheorem{example}[theorem]{Example}
\newcommand{\cee}{\mathbb{C}}
\newcommand{\W}{{\mathcal W}}
\newcommand{\N}{{\mathcal N}}
\newcommand{\C}{{\mathcal C}}
\newcommand{\F}{{\mathcal F}}
\newcommand{\B}{\mbox{\bf b}}
\newcommand{\A}{\mbox{\bf a}}
\newcommand{\ww}{\mbox{\bf w}}
\begin{document}

\title{Convex plumbings and Lefschetz fibrations}

\author{David Gay}
\author{Thomas E. Mark}
\thanks{This work was partially supported by a grant from the Simons Foundation (\#210381 to David Gay). The second author was partially supported by NSF grant DMS-0905380.}
\begin{abstract}
We show that under appropriate hypotheses, a plumbing of symplectic surfaces in a symplectic 4-manifold admits strongly convex neighborhoods. Moreover the neighborhoods are Lefschetz fibered with an easily-described open book on the boundary supporting the induced contact structure. We point out some applications to cut-and-paste constructions of symplectic 4-manifolds.
\end{abstract}

\maketitle

\section{Introduction}

Strong symplectic convexity is an essential feature of most $4$--dimensional symplectic cut-and-paste operations, which have been particularly important in recent constructions of ``small'' exotic $4$--manifolds (\cite{baldkirk}, \cite{SSP}, \cite{jongil}). For such an operation to work, one needs to:
\begin{enumerate}
 \item {\em recognize} a symplectic $4$--manifold $(Z,\eta)$ with strongly convex boundary inside an ambient symplectic $4$--manifold $(X,\omega)$ and characterize the induced contact structure $\xi$ on $\partial Z$, 
 \item {\em construct} a replacement symplectic $4$-manifold $(Z',\eta')$ with strongly convex boundary and characterize the induced contact structure $\xi'$ on $\partial Z'$, and finally 
 \item recognize that $(\partial Z,\xi)$ is {\em contactomorphic} to $(\partial Z', \xi')$.
\end{enumerate}
Then $(X\setminus Z) \cup Z'$ inherits a symplectic structure from $\omega$ and $\eta'$. 

The main result of this paper is a general method for implementing step~1 that dovetails with a standard method for implementing step~2 (constructing
Stein surfaces as Lefschetz fibrations over $D^2$) and a standard method for
characterizing contact $3$-manifolds (via open book decompositions) and thus
for carrying out step~3.

Let $\C = C_1 \cup \ldots \cup C_n$ be a configuration of symplectic surfaces in $(X,\omega)$ intersecting $\omega$--orthogonally according to a connected plumbing graph $\Gamma$, with {\em negative definite} intersection form $Q = (q_{ij}) = ([C_i] \cdot [C_j])$. Suppose that for each row in $Q$, we have a nonpositive row sum $s_i = \sum_j q_{ij} \leq 0$. For convenience, we also assume that $\Gamma$ contains no edges connecting a vertex with itself. Let
$\Sigma$ be the result of connect-summing $|s_i|$ copies of $D^2$ to each $C_i$ and then
connect-summing these surfaces according to $\Gamma$. Let $\{c_1, \ldots, c_k\}$ be the collection of simple closed curves on $\Sigma$ consisting of one curve around each connect-sum neck, with $\tau$ equal to the product of right Dehn twists along these curves (see Figure~\ref{GraphPageCurves}). 

\begin{theorem} \label{T:Main}
Any neighborhood of $\C$ contains a neighborhood $(Z,\eta)$ of $\C$ with  strongly convex boundary, that admits a symplectic Lefschetz fibration $\pi : Z \to D^2$ having regular fiber $\Sigma$ and exactly one singular fiber $\Sigma_0 = \pi^{-1}(0)$. The vanishing cycles are $c_1, \ldots, c_k$ and $\C$ is the union of the closed components of $\Sigma_0$. The induced contact structure $\xi$ on $\partial Z$ is supported by the induced open book $(\Sigma,\tau)$.
\end{theorem}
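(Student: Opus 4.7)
The plan is to assemble the Lefschetz fibration from standard local symplectic models and then treat convexity and the open book separately. First, an $\omega$-orthogonal symplectic neighborhood theorem for configurations implies that any open neighborhood of $\C$ contains an arbitrarily thin standard plumbing model $N$, built from symplectic disk bundles $E_i \to C_i$ of Euler number $q_{ii}$ plumbed according to $\Gamma$. It therefore suffices to construct $(Z, \eta, \pi)$ inside such an $N$.

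For the construction, I would fit together local Lefschetz models. At each plumbing intersection $p \in C_i \cap C_j$, a Darboux chart in which $C_i, C_j$ are the complex coordinate planes admits the holomorphic map $(z,w) \mapsto zw$ as a local Lefschetz fibration: one critical point, singular fiber $C_i \cup C_j$, regular fiber a cylinder, and vanishing cycle the core circle --- i.e. the curve around a plumbing neck of $\Sigma$. Over the remainder of each $C_i$ the disk bundle contributes no critical points but leaves an Euler-number deficit relative to $q_{ii}$ that must be absorbed by inserting exactly $|s_i|$ additional Lefschetz critical points, whose vanishing cycles are the boundary-parallel neck curves; each contributes a disk component of $\Sigma_0$ meeting $C_i$ at a single node. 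A bookkeeping check using $[\Sigma] \cdot [C_i] = 0$ in $Z$ yields $[C_i]^2 = -\sum_{j \neq i} q_{ij} - |s_i| = q_{ii}$, using $s_i \leq 0$ together with $s_i = q_{ii} + \sum_{j \neq i} q_{ij}$. Interpolating these local pieces over disjoint arcs in $D^2$ produces a global Lefschetz fibration $\pi : Z \to D^2$ with regular fiber $\Sigma$, vanishing cycles $c_1,\ldots,c_k$, and the claimed singular fiber; a Thurston-Gompf-type construction then installs a compatible symplectic form $\eta$ on $Z$ that agrees with $\omega$ near $\C$ by the neighborhood theorem.

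To prove strong convexity of $\partial Z$, I would build a Liouville vector field for $\eta$ that is transverse to $\partial Z$ by constructing it separately on the vertical face $\Sigma \times \partial D^2$ (a field lifted from a radial field on $D^2$) and the horizontal face $\partial \Sigma \times D^2$ (a field built from the outward conormal of $\partial \Sigma \subset \Sigma$), then interpolating across the corner at $\partial \Sigma \times \partial D^2$ and smoothing. The open book on $\partial Z$ induced by $\pi$ has page $\Sigma$ and monodromy $\tau$ by the standard Kas description of the monodromy of a Lefschetz fibration, and the induced contact structure $\xi$ is supported by $(\Sigma, \tau)$ via Giroux's correspondence applied to the symplectic Lefschetz fibration. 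The main obstacle is the cornered convexity argument --- maintaining Liouville transversality through the interpolation near $\partial \Sigma \times \partial D^2$ --- together with arranging the whole construction $(Z,\eta,\pi)$ to fit inside the prescribed, possibly very small, neighborhood of $\C$ without disrupting the symplectic form.
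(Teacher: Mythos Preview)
Your outline of the Lefschetz fibration itself is fine and close to what the paper does: local Lefschetz models at the nodes, extra nodal disks to absorb the deficit $|s_i|$, and the neighborhood theorem to transplant the model into $(X,\omega)$. The description of the fiber, vanishing cycles, and monodromy is correct.

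The genuine gap is in the symplectic/convexity step, and it is exactly the difficulty the paper is written to resolve. You propose to install $\eta$ by a Thurston--Gompf argument and \emph{then} build a Liouville field near $\partial Z$ from a lifted radial field on the vertical face and a conormal field on the horizontal face. But a horizontal lift of $\tfrac12 r\,\partial_r$ is Liouville for $\eta$ only when $\eta$ has a very special (essentially product) form near the vertical boundary; a generic Gompf-type form does not have this property, and you give no mechanism to arrange it. The paper's ``Additional remarks'' flag precisely this: the Stein/handle picture gives convexity but the wrong (exact) symplectic form, while the Gompf picture gives the right symplectic form but no convexity; the content of the paper is to achieve both simultaneously. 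Note also that your argument never invokes negative-definiteness of $Q$, whereas the paper uses it essentially (to solve the linear system $-Q\,\A = \B$ in positive reals for the areas), so something must be missing.

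Concretely, the paper does not separate form and convexity. It builds, on each disk bundle, an explicit $\omega$ together with a Liouville field $V$ defined on the complement of $C$ and the normal disks over the marked points, with a prescribed singular local model
\[
V=\tfrac12\bigl(r_1+\tfrac{A_i}{n_i r_1}\bigr)\partial_{r_1}+\tfrac12\bigl(r_2+\tfrac{A}{m r_2}\bigr)\partial_{r_2}
\]
near each plumbing point. These local models are symmetric under swapping the two factors, so the Liouville fields glue across plumbings; the constants $A_v$ are chosen using $-Q\A=\B$ so the areas match. The Lefschetz map $\pi$ is then written down in the same charts, and one checks directly that $V$ is outward along $\partial\pi^{-1}(D^2(\epsilon))$ for every $\epsilon>0$. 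This last point also handles the ``arbitrarily small neighborhood'' requirement in one stroke, whereas your scheme would have to rerun the corner-interpolation for each shrinking $Z$. If you want to salvage your approach, you would need to replace the Thurston--Gompf step by an explicit form admitting a Liouville field with the right singular behavior along $\C$---which is essentially what the paper constructs.
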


\begin{figure}
\labellist
\small\hair 2pt
\pinlabel $(0,-2)$ [r] at 37 56
\pinlabel $(1,-2)$ [r] at 36 4
\pinlabel $(1,-4)$ [l] at 105 31
\pinlabel $(0,-5)$ [bl] at 188 35
\pinlabel $(2,-2)$ [l] at 250 33
\endlabellist
\centering

 \includegraphics{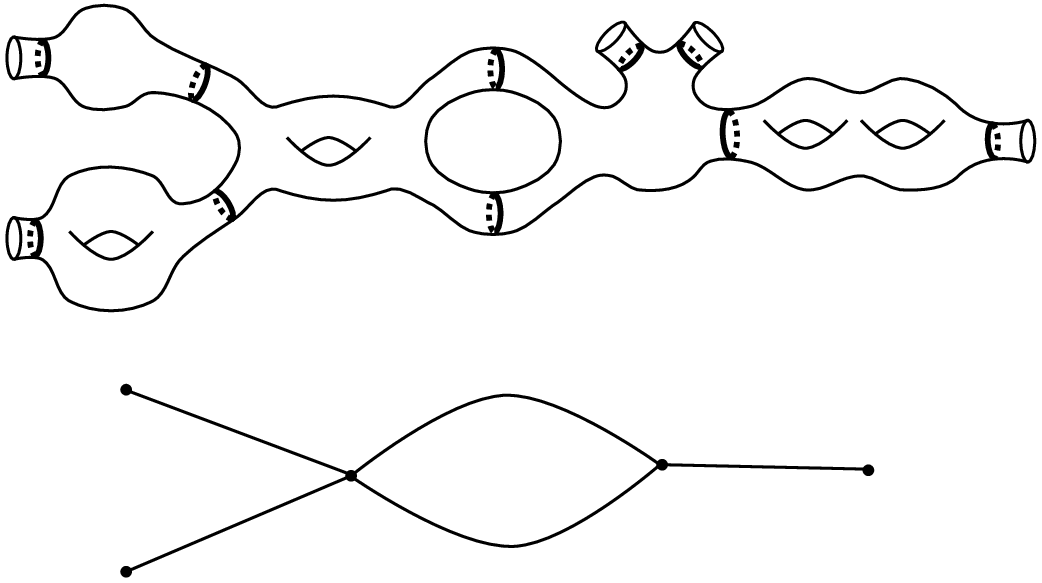}
 \caption{\label{GraphPageCurves} A plumbing graph satisfying the hypotheses of Theorem~\ref{T:Main}, with the associated surface $\Sigma$ and vanishing cycles $c_1, \ldots, c_{10}$. Vertices of the graph are labeled by (genus, self-intersection).}
\end{figure}

We now comment on the importance of this result and how it fits in with known results to give a very general package for symplectic cut-and-paste operations in dimension $4$. 

First note that our theorem {\em recognizes} the codimension-$0$ submanifold $(Z, \eta)$ by recognizing a collection of surfaces intersecting according to a plumbing graph, and it is obviously easier to identify a codimension-$2$ object (the configuration of surfaces) than to directly identify a codimension-$0$ object. Thus this is an effective method for implementing step~1 of the procedure outlined above and such a method, starting from configurations of surfaces, has been at the core of most $4$--dimensional symplectic cut-and-paste operations to date.

The strong convexity part of our main result was proved in \cite{gaystip}, but, without the Lefschetz fibration and open book, such a result seemed to be useful for cut-and-paste operations only when one had strong classification results for tight contact structures on the $3$--manifold $\partial Z$.

It is clear that being able to characterize $(\partial Z, \xi)$ using an open book is valuable as long as a good method exists to construct the replacement $(Z',\omega')$ which yields an open book characterization of $(\partial Z', \xi')$. Such a method does exist, using Eliashberg-Weinstein handle attachments along Legendrian knots to construct compact Stein surfaces (\cite{akboz}, \cite{eliash2}, \cite{gompf}, \cite{loipier}, \cite{weinstein}). 

In particular, the monodromy substitution techniques of Endo--Mark--Van Horn-Morris \cite{EMV} become symplectic operations thanks to this result. More generally, we have the following corollary:

\begin{cor}
 In the setting of Theorem~\ref{T:Main}, suppose that $c'_1, \ldots, c'_{k'}$ is a sequence of homologically essential simple closed curves in $\Sigma$ such that the product $\tau'$ of Dehn twists along these curves is isotopic to $\tau$. Let $Z'$ be the $4$--manifold with smooth Lefschetz fibration over $D^2$ having regular fiber $\Sigma$ and vanishing cycles $c'_1, \ldots, c'_{k'}$ (on disjoint fibers). Then $(X \setminus Z) \cup Z'$ supports a symplectic form $\eta$ inherited from $\omega$ on $X$ and $\omega'$ on $Z'$.
\end{cor}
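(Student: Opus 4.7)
The strategy is to recognize $Z'$ as a Stein (hence symplectic with strongly convex boundary) filling of the same contact manifold as $\partial Z$, and then to glue.

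First, apply Theorem~\ref{T:Main} to obtain $(Z, \eta) \subset (X, \omega)$ with strongly convex boundary, with induced contact structure $\xi$ on $\partial Z$ supported by the open book $(\Sigma, \tau)$. Then $(X \setminus \mathrm{int}(Z), \omega)$ has strongly concave boundary $(\partial Z, \xi)$ equipped with a collar on which $\omega$ takes the form $d(e^t \alpha)$ for some contact form $\alpha$ for $\xi$ (here $t$ is the inward collar coordinate).

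Next, build $(Z', \omega')$ as a Stein filling. Because each $c'_i$ is homologically essential in $\Sigma$, it is in particular non-separating, so by the Loi--Piergallini/Akbulut--Ozbagci correspondence (\cite{akboz}, \cite{loipier}), the smooth Lefschetz fibration $Z' \to D^2$ with regular fiber $\Sigma$ and vanishing cycles $c'_1, \ldots, c'_{k'}$ admits a compatible Stein (hence Weinstein) structure: each vanishing cycle is realized as a Legendrian curve on a page, and the corresponding $2$-handle is attached with framing one less than the Thurston--Bennequin number. This yields a symplectic form $\omega'$ on $Z'$ with strongly convex boundary, whose induced contact structure $\xi'$ on $\partial Z'$ is supported (by the Giroux--Loi--Piergallini construction) by the open book $(\Sigma, \tau')$.

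Now apply the Giroux correspondence between contact structures and open books up to positive stabilization: since $\tau' \simeq \tau$ in the mapping class group of $\Sigma$ rel boundary, the open books $(\Sigma, \tau)$ and $(\Sigma, \tau')$ are isomorphic, so $(\partial Z, \xi)$ and $(\partial Z', \xi')$ are contactomorphic. Pick a contactomorphism $\phi : \partial Z' \to \partial Z$, and use it to identify $\partial Z' = \partial Z$. In a collar of $\partial Z'$ we may write $\omega' = d(e^t \alpha')$ where $\alpha'$ is a contact form for $\xi'$; under $\phi$, $\alpha'$ and $\alpha$ are contact forms for the same cooriented contact structure, so $\alpha' = f \alpha$ for some positive function $f$. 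A standard rescaling of one of the two collar coordinates (equivalently, a Liouville flow reparametrization) then allows us to match the symplectization collars of $X \setminus \mathrm{int}(Z)$ and $Z'$ across $\partial Z$, producing a smooth symplectic form $\eta$ on $(X \setminus Z) \cup_\phi Z'$ that restricts to $\omega$ on $X \setminus \mathrm{int}(Z)$ and to a positive multiple of $\omega'$ on $Z'$.

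The main technical point is the symplectic gluing in the last paragraph: one must ensure that the Liouville vector fields on the two sides can be made to match along the collar after the contactomorphism. This is a standard, by now routine, argument (see e.g.\ the symplectic sum and symplectic cut-and-paste literature), and is precisely what strong convexity on both sides is designed to enable. The steps that require the hypotheses of the corollary are the homologically essential assumption (so that the Loi--Piergallini construction applies to $Z'$) and the mapping-class-group relation $\tau' \simeq \tau$ (so that the Giroux correspondence identifies the two contact boundaries).
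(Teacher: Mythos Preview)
Your argument is essentially the same as the paper's: build a Stein structure on $Z'$ via Akbulut--Ozbagci/Loi--Piergallini, identify the boundary contact structures via the common supporting open book, and glue concave to convex by the standard collar argument. One small slip: on a surface with boundary, ``homologically essential'' does \emph{not} imply ``non-separating'' (a boundary-parallel curve on a pair of pants is separating but not null-homologous); fortunately the allowable-Lefschetz-fibration theorem only requires the vanishing cycles to be homologically nontrivial, so you can simply delete the ``in particular non-separating'' clause.
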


\begin{proof}
In this case it follows from \cite{akboz}, \cite{eliash2}, \cite{honda}, \cite{weinstein}  that $Z'$ supports a symplectic form $\omega'$ with strongly convex boundary (in fact, $Z'$ admits a Stein structure), and the induced contact structure $\xi'$ on $\partial Z'$ is supported by the induced open book. Since $\tau = \tau'$, this is the same as the open book on $\partial Z$, and hence $(\partial Z',\xi')$ is contactomorphic to $(\partial Z, \xi)$. Since $X \setminus Z$ is concave  and $Z'$ is convex, the gluing to construct $(X \setminus Z) \cup Z'$ can be performed symplectically by a standard argument.
\end{proof}

In other words, if $\tau$ is a composition of disjoint twists as in the theorem, the relation $\tau = \tau'$ in the mapping class group of $\Sigma$ immediately gives rise to a symplectic cut-and-paste operation whereby the Lefschetz fibration described by $\tau$---namely a negative-definite plumbing---is replaced by that described by $\tau'$. Here and below we are making use of the fact that a Lefschetz fibration over $D^2$ is uniquely specified by a sequence of curves on the generic fiber, which describe the monodromy of the fibration as a sequence of Dehn twists.

As a first example, consider a torus with one boundary component $d$, and let $a$ and $b$ be standard generators of the first homology. We have a well-known relation $t_d = (t_at_b)^6$, which for the sake of interest we square to $t_d^2 = (t_at_b)^{12}$. The multitwist $\tau = t_d^2$ describes a neighborhood of the configuration $\C$ consisting of a torus of square $-1$ plumbed with a sphere of square $-2$, while the word $\tau' = (t_at_b)^{12}$ describes a Stein manifold diffeomorphic to the complement of a fiber $F$ and section $S$ in an elliptic surface $E(2)$. Hence we have a symplectic surgery according to the corollary, whereby a symplectic manifold $X$ containing a copy of $\C$ gives rise to a new symplectic manifold $\widetilde{X} = (X - nbd(\C)) \cup_\partial (E(2) - nbd(F\cup S))$. Indeed, one might think of this as a fiber sum of $Z$ with $E(2)$ along the singular surface $\C\subset X$ and the ``dual configuration'' $F\cup S\subset E(2)$. 

Similarly, the relations of \cite{KO} give rise (after squaring) to analogous fiber sum operations along star-shaped plumbing graphs: for $k\in\{1,\ldots,9\}$, Korkmaz and Ozbagci \cite{KO} give us explicit relations $\tau_k = \tau'_k$ where $\tau_k$ is a multitwist on a torus with $k$ boundary components describing a plumbing as in Figure \ref{stargraph}, and where $\tau'_k$ describes the complement of $\C'=F\cup S_1\cup \cdots \cup S_k$ in $E(2)$ for disjoint sections $S_1,\ldots,S_k$. In other words, if $(X,\omega)$ contains a symplectic copy of the configuration $\C$ described in Figure \ref{stargraph}, then the operation $X\leadsto X' = (X\setminus \C)\cup_\partial (E(2) \setminus \C')$ is symplectic. 
\begin{figure}
 \includegraphics[width=2.3in]{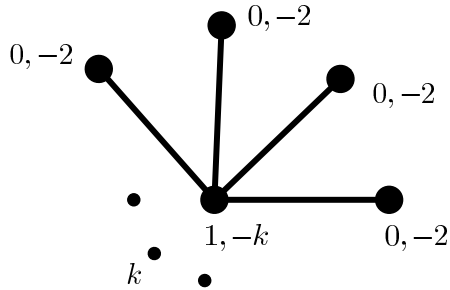}
\caption{\label{stargraph} Plumbing graph. Vertices are labeled by (genus, self-intersection); there are $k$ legs, $k\in\{1,\ldots,9\}$.}
\end{figure}

In another direction, Endo--Mark--Van Horn-Morris \cite{EMV} describe several families of relations of the required form, in which $\tau$ is a multitwist on a planar surface, and $\tau'$ gives rise to a rational ball. We infer that the plumbings described by these words $\tau$, comprising the linear plumbings of Fintushel-Stern and Park as well as the three-legged graphs in the families $\W$ and $\N$ of Stipsicz, Szab\'o, and Wahl \cite{SSW}, can be replaced symplectically by the rational ball described by the corresponding $\tau'$. In other words, this gives a new proof that these rational blowdowns may be performed symplectically.

\begin{proof}[Proof of Theorem \ref{T:Main}] Given a plumbing graph $\Gamma$, we write $X_\Gamma$ for the associated plumbed 4-manifold. In Theorems \ref{convthm} and \ref{thm2} below we construct a ``model'' symplectic structure $\omega$ on $X_\Gamma$, along with a Lefschetz fibration $\pi: X_\Gamma\to D^2$, satisfying several requirements. First, the configuration $\C_\Gamma = \bigcup_{v\in\Gamma} C_v$ is symplectically embedded (with orthogonal intersections), with arbitrary prescribed areas for the individual curves $C_v$. The Lefschetz fibration on $X_\Gamma$ has symplectic fibers, and moreover each fibered neighborhood $\pi^{-1}(D^2(\epsilon))$ is convex. In particular any neighborhood of $\C_\Gamma$ contains a convex neighborhood. Finally, we check that the contact structures on the boundary of fibered neighborhoods induced from the convexity and from the Lefschetz fibration agree. From this the theorem follows by an application of a neighborhood theorem for configurations of symplectic curves (see, e.g., \cite{mcrae}).\end{proof}

Recall that a symplectic manifold is convex if it admits a Liouville field pointing transversely out of the boundary, which entails in particular that the symplectic form be exact on the subset where the Liouville field is defined. If $D\to C_v$ is a disk bundle, we wish to find a symplectic form $\omega$ on $D$ for which the zero-section $C_v\subset D$ is symplectic, but has arbitrarily small convex neighborhoods. Thus we look for a Liouville field defined on $D-C_v$, which must have a singularity along $C_v$ itself since $\omega$ is not exact on $D$. Recall that constructing a plumbing of two such disk bundles $D',D''$ entails exchanging the factors in local trivializations of $D'$ and $D''$. We wish to find a new Liouville field over the plumbed manifold; hence we should arrange that the Liouville field on $D'$ also has a singularity along the normal disk over the plumbing point, and correspondingly for $D''$. 

In Lemma \ref{basiclemma} below we show how to construct Liouville fields with the necessary singularities on disk bundles, with local forms adapted to plumbings. This technique of {\it convex plumbing} quickly establishes the existence of an appropriate symplectic form on $X_\Gamma$, as laid out in Theorem \ref{convthm}. The subsequent section revisits the construction and imposes the additional structure of a Lefschetz fibration to give Theorem \ref{thm2}.

\subsection*{Additional remarks} The idea motivating this work was to use the results of \cite{EMV} realizing (smooth) rational blowdowns via monodromy substitutions to give a new proof that these are symplectic operations: in particular, to use the Lefschetz fibration on the neighborhood of a plumbing. One looks for the necessary convexity, and this seems at first sight, perhaps, to be straightforward: for example, the handle description provided by the Lefschetz fibration shows that the neighborhood of a plumbing admits a Stein structure, and hence is convex. However, the Stein structure is not the one that models a plumbing of {\it symplectic} surfaces, since its symplectic form is necessarily exact. Alternatively, one could imagine using Gompf's results on the existence of symplectic forms on Lefschetz fibrations to provide the desired model: this gives the correct symplectic neighborhood of a plumbing, but now the convexity is no longer clear. The point of the constructions in this paper is to arrange for the various requirements to hold simultaneously: a symplectic form giving the prescribed structure on the plumbed curves so as to use the neighborhood theorem, a Lefschetz fibration with symplectic fibers to control the contact structure on the boundary, and an appropriate Liouville field to give convexity.

\section{Convex Plumbing}

Consider a smooth closed oriented 2-manifold $C$ of genus $g$, and let $m$ and $n_1,\ldots, n_m$ be positive integers. Choose a positive real number $A$, and fix $m$ additional nonnegative constants $A_1,\ldots,A_m$ with the property that $A > A_1/n_1 + \cdots + A_m/n_m$. Let $p:D\to C$ be the oriented disk bundle of degree $-m$ over $C$, and also fix $m$ marked points $x_1,\ldots,x_m\in C$. 

\begin{lemma}\label{basiclemma} There exists a symplectic form $\omega$ on $D$ with the following properties.
\begin{itemize}
\item The zero section $C\subset D$ is a symplectic submanifold with area 
\[\pi(A - A_1/n_1-\cdots - A_m/n_m),
\]
 and the fibers of $p$ are symplectic.
\item  Let $\F$ denote the union of those fibers $D_{x_i}$ of $D$ for which the corresponding $A_i > 0$. There is a neighborhood $U$ of $C$ and a Liouville vector field $V$ defined on $U - (C\cup \F)$.
\item There are local Darboux trivializations of $D$ over disjoint neighborhoods $W_i\subset C$ of each $x_i$, with respect to which $V$ appears as
\[
V = \frac{1}{2}(r_1 + A_i/n_ir_1) \partial_{r_1} + \frac{1}{2}(r_2 + A/mr_2)\partial_{r_2}.
\]
Here $(r_1,\theta_1)$ are coordinates on $W_i$ and $(r_2,\theta_2)$ are fiberwise coordinates on $D|_{W_i}$.
\item Every neighborhood of $C\cup \F$ contains a convex neighborhood, in particular a neighborhood for which $V$ is outward-pointing and transverse along the boundary.
\end{itemize}
\end{lemma}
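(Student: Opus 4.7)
The plan is to build $\omega$ and $V$ from the local models prescribed in the statement, then extend globally using a connection construction \`a la Thurston. In each trivialization $W_i \times D^2 \subset D$, take $\omega = r_1\,dr_1\wedge d\theta_1 + r_2\,dr_2\wedge d\theta_2$ and verify directly that the stated $V$ is $\omega$-dual to the singular 1-form
\[
\lambda = \tfrac12\bigl(r_1^2 + A_i/n_i\bigr)\,d\theta_1 + \tfrac12\bigl(r_2^2 + A/m\bigr)\,d\theta_2.
\]
Since $d\lambda = \omega$ away from $\{r_1=0\}\cup\{r_2=0\}$, this shows $V$ is Liouville on the complement, with singular locus precisely the base axis (only when $A_i>0$) and the zero section, matching the description of $C\cup\F$.

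To globalize, equip $C$ with a symplectic form $\omega_C$ that agrees with $r_1\,dr_1\wedge d\theta_1$ in each chart $W_i$ and whose total area is chosen so that the final area of the zero section comes out to $\pi(A-\sum A_i/n_i)$. Pick a connection 1-form $\alpha$ on the unit circle bundle of $D$ with curvature proportional to $\omega_C$ and equal to $d\theta_2$ in each chosen trivialization; such an $\alpha$ exists since the Euler number is $-m$ and we have prescribed its form only in a collection of disjoint disks. Define
\[
\omega \;=\; p^*\omega_C \;+\; d\bigl(\psi(r_2)\,\alpha\bigr)
\]
globally on $D$, with $\psi$ a smooth function of the fiber radius chosen so that $\psi(r_2) = \tfrac12 r_2^2 + \tfrac{A}{2m}$ near $r_2=0$ but extending to give a globally defined symplectic form (using the correspondence between $r_2^2$ and the pullback of a smooth function on the total space). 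In each chart this agrees with the local model; the fibers and the zero section are symplectic, and the singular global primitive obtained by patching $\lambda$ with $p^*$-pullbacks of primitives of $\omega_C$ across chart overlaps determines $V$ uniquely as its $\omega$-dual.

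For the convexity assertion, observe that each coefficient of $V$ is a strictly positive function of its radial variable. Given a neighborhood $N$ of $C\cup\F$, choose $\epsilon_1,\epsilon_2 > 0$ small enough that a ``box'' $\{r_1 \le \epsilon_1\}\cup\{r_2 \le \epsilon_2\}$ in each chart, joined outside the charts to a thin tubular neighborhood of $C$ via the flow of $V$, lies in $N$. After smoothing the corners so that the outward conormal remains a nonnegative combination of $dr_1$ and $dr_2$, the boundary is a smooth hypersurface transverse to $V$, yielding the required convex neighborhood.

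The main obstacle is the compatibility of the local models: the singular primitive $\lambda$ carries the closed-but-not-exact pieces $\tfrac{A_i}{2n_i}d\theta_1$ and $\tfrac{A}{2m}d\theta_2$, which encode ``trapped area'' along the singular loci and must be absorbed consistently into the global choice of $\omega_C$ and the curvature of $\alpha$. Tracking these contributions by Stokes' theorem on the zero section and on a generic fiber is precisely where the hypothesis $A > \sum A_i/n_i$ enters, guaranteeing that the residual area assigned to $C$ is positive and hence that a genuine symplectic form with the stated properties exists.
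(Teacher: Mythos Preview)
Your Thurston-style connection approach is genuinely different from the paper's, and the gap lies exactly where you globalize the Liouville primitive. Splitting off the fiber piece $\psi(r_2)\alpha$ from a candidate primitive, you need a $1$-form $\eta$ with $d\eta = p^*\omega_C$ that restricts in each $p^{-1}(W_i)$ to the pullback of $\tfrac12(r_1^2 + A_i/n_i)\,d\theta_1$. Your phrase ``$p^*$-pullbacks of primitives of $\omega_C$'' asks for $\eta$ itself to be pulled back from the base; but then $\omega_C$ must admit a primitive on $C\setminus\{x_i:A_i>0\}$ with those prescribed singularities, and Stokes forces $\int_C\omega_C=\pi\sum A_i/n_i$. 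That is incompatible with producing the stated area on the zero section once the curvature contribution is added, and is outright impossible when all $A_i=0$ since $C$ is closed. (There is also a small inconsistency: requiring $\alpha=d\theta_2$ in every $W_i$ makes the curvature vanish there, so it cannot be ``proportional to $\omega_C$''.) One could try to let $\eta$ acquire a fiber-angular component outside the $W_i$, using that $p^*\omega_C$ is exact on $D\setminus C$, but that is precisely the missing work and it does not come for free from your patching description.

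The paper avoids this obstruction by a different device: it introduces an \emph{auxiliary} point $x_0\in C$, builds the symplectic form first on the \emph{trivial} product $Z_0=(C\setminus x_0)\times D^2$, and only then recovers the degree $-m$ bundle by gluing in a standard bidisk $Z_1$ over $x_0$ via an explicit symplectomorphism $\phi$. On the open surface $C\setminus x_0$ the area form is honestly exact, so a base Liouville field $X$ with the prescribed local forms near $x_1,\dots,x_m$ is obtained by a direct Mayer--Vietoris/Stokes check, the excess flux of size roughly $\pi A$ being absorbed at $x_0$; the product Liouville field on $Z_0$ and the standard one on $Z_1$ are then verified to be intertwined by $\phi$. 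In effect the paper concentrates all of the bundle twisting (your connection curvature) at the single point $x_0$, which is exactly what makes a global base primitive available and the Liouville field explicit.
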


\begin{proof} Choose a constant $\delta> 0$ such that $m\delta < \frac{1}{2}(A - A_1/n_1 - \cdots - A_m/n_m)$. Also fix an additional point $x_0 \in C$ distinct from the other $x_i$. Fix disjoint disk neighborhoods $W_0,W_1,\ldots, W_m$ of each marked point; then we claim we can find an area form $\beta$ on $C - x_0$ such that:
\begin{enumerate}
\item In suitable polar coordinates on $W_1,\ldots W_m$, we have that $W_i$ is the disk of radius $\sqrt{\delta}$ and $\beta = r_1dr_1\,d\theta_1$.
\item Under an identification of the deleted neighborhood $W_0 - x_0$ with the annulus $(\sqrt{A - m\delta}, \sqrt{A}) \times S^1$, $\beta$ also takes the form $\beta = r_1dr_1\,d\theta_1$.
\item The total area of $\beta$ is $\pi(A-A_1/n_1 - \cdots - A_m/n_m)$.
\end{enumerate}
Moreover, there is a Liouville field $X$ on $C - \{x_0,\ldots,x_m\}$ such that
\begin{enumerate}\setcounter{enumi}{3}
\item Near $x_0$, we have $X = \frac{1}{2}r_1\partial_{r_1}$.
\item Near $x_i$, $i>0$, we have $X = \frac{1}{2}(r_1 + A_i/n_ir_1)\partial_{r_1}$.
\end{enumerate}
To do this, first note that the desired area form has area $\pi\delta$ on each $W_i$, $i = 1,\ldots, m$, and area $m\pi\delta$ on $W_0 - x_0$. Hence from the condition on $\delta$, an area form on $C - x_0$ satisfying (1), (2) and (3) exists: define $\beta$ locally by the given conditions, and extend arbitrarily to an area form of the required area.  Note that in (2) our convention is that $\lim_{r_1\to \sqrt{A}}(r_1,\theta) = x_0$, so geometrically one can picture $W_0$ as an expanding cylinder attached to $C - nbd(x_0)$.

Now we must see that we can extend the given Liouville fields over the complement of ${\mathcal W} = W_0\cup \cdots \cup W_m$. Equivalently, we must find a primitive for $\beta$ extending the 1-form $\alpha = \iota_{X}\beta \in \Omega^1(\W)$. The existence of such a primitive follows from a Mayer-Vietoris argument together with a simple calculation in local coordinates showing that $\int_{\partial(C-\W)} \alpha = \int_{C-\W}\beta$.

Next we construct the bundle $D$ as a symplectic manifold. Let $Z_0 = (C- x_0)\times D^2(\sqrt{\delta})$ and $\omega_0 = \beta + r_2dr_2\,d\theta_2$, where $D^2(\sqrt{\delta})$ is the disk of radius $\sqrt{\delta}$ with coordinates $(r_2,\theta_2)$. On $Z_0$ we have a Liouville field $V_0 = X + \frac{1}{2}(r_2 + A/mr_2)\partial_{r_2}$, defined on the complement of $C$ and $\{x_1,\ldots,x_m\}\times D^2(\sqrt{\delta})$. 

Let $Z_1 = D^2(\sqrt{m\delta}) \times D^2(\sqrt{\delta})$, with standard symplectic form $\omega_1 = r_1dr_1\,d\theta_1 + r_2dr_2\,d\theta_2$ and Liouville field $V_1 = \frac{1}{2}(r_1\partial_{r_1} + (r_2 + A/mr_2)\partial_{r_2})$. 

Let $T\subset Z_1$ denote the subset $\{0< r_1^2 < m\delta,\, 0\leq r_2^2 < r_1^2/m\}$, and define an embedding $\phi: T\to W_0 \times D^2(\sqrt{\delta})\subset Z_0$ by
\[
\phi(r_1,\theta_1,r_2,\theta_2) = (\sqrt{-r_1^2 + mr_2^2 + A}, -\theta_1, r_2, m\theta_1+\theta_2).
\]
One checks that $\phi$ is a symplectic embedding of $T$ onto $W_0\times D^2(\sqrt{\delta})$, and intertwines the Liouville fields $V_0$, $V_1$. Moreover, $Z_0\cup_\phi Z_1$ is diffeomorphic to the disk bundle $D$ of degree $-m$ over $C$. Now, our gluing does not respect the obvious disk bundle projections $p_0, p_1$ on $Z_0$ and $Z_1$. However, it is not difficult to modify the map $p_0\circ \phi$, defined a priori on $T\subset Z_1$, into a disk bundle projection on $Z_1$ having symplectic fibers.

This proves all but the last claim in the statement; the latter we defer until the next section.
\end{proof}

 Let $\Gamma$ be a finite connected graph whose vertices $v$ are labeled with 
genera $g_v$ and weights $-m_v$. Suppose that $\Gamma$ satisfies
\begin{enumerate}
\item[i.] $\Gamma$ contains no loops, i.e., no edges from a vertex to itself.
\item[ii.] For each $v$ we have $m_v \geq d_v$, where $d_v$ is the degree (valence) of $v$.
\item[iii.] For any vector $\B$ of positive real numbers, there is another vector of positive numbers $\A$ such that $-Q_\Gamma\A = \B$, where $Q_\Gamma$ is the intersection (incidence) matrix of $\Gamma$.
\end{enumerate}
Note that given (ii), condition (iii) is equivalent to $Q_\Gamma$ being negative-definite: see Lemma \ref{deflemma} below.

Then we have the following convexity result.

\begin{theorem}\label{convthm} With $\Gamma$ as above, let $X_\Gamma$ be the plumbing of disk bundles having a disk bundle of degree $m_v$ over a surface $C_v$ of genus $g_v$ for each vertex $v$ of $\Gamma$, plumbed according to the edges of $\Gamma$. Then $X$ admits a symplectic form $\omega$ with the following properties.
\begin{itemize}
\item Each surface $C_v$ is symplectic, with arbitrary prescribed area $B_v$, and each intersection among the $C_v$ is $\omega$-orthogonal.
\item There is a Liouville field $V$ for $\omega$ defined on $X_\Gamma - \bigcup_v C_v$.
\item Any neighborhood of $\bigcup_v C_v$ contains a convex neighborhood; in particular a neighborhood for which $V$ is outward-pointing and transverse along the boundary. 
\end{itemize}
\end{theorem}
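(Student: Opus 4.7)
My plan is to apply Lemma \ref{basiclemma} at each vertex and glue the resulting local symplectic disk bundles via the standard plumbing map, arranging all parameters so that the Liouville fields automatically agree on overlaps.

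First I would pick area parameters. By hypothesis (iii) there exists a positive vector $\A = (a_v)_v$ with $-Q_\Gamma \A = \B/\pi$; explicitly,
\[
m_v a_v - \sum_{w\sim v} a_w = B_v/\pi \qquad \text{for each } v.
\]
Set $A^{(v)} := m_v a_v$. Then I would apply Lemma \ref{basiclemma} to each $C_v$ with bundle degree $-m_v$, total parameter $A = A^{(v)}$, and a set of $m_v$ marked points consisting of one point $x_{v,e}$ per edge $e$ incident to $v$ (using (ii) to spare $m_v - d_v$ dummy points). At the edge marked points declare $n_i = m_w$ and $A_i = A^{(w)} = m_w a_w$, so $A_i/n_i = a_w$; at the dummy points set $A_i = 0$. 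The constraint $A > \sum A_i/n_i$ is then $m_v a_v > \sum_{w\sim v} a_w = B_v/\pi > 0$, which holds. The lemma outputs a symplectic disk bundle $D_v$ in which $C_v$ is symplectic of the prescribed area $\pi(m_v a_v - \sum_{w \sim v} a_w) = B_v$, together with a Liouville field $V^{(v)}$ defined off $C_v \cup \F^{(v)}$ which in Darboux coordinates near $x_{v,e}$ reads
\[
V^{(v)} = \tfrac{1}{2}\!\left(r_1 + \tfrac{a_w}{r_1}\right)\partial_{r_1} + \tfrac{1}{2}\!\left(r_2 + \tfrac{a_v}{r_2}\right)\partial_{r_2}.
\]

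Next I would glue the $D_v$ by the standard plumbing map $(r_1,\theta_1,r_2,\theta_2)\mapsto(r_2,\theta_2,r_1,\theta_1)$, which is a symplectomorphism of the Darboux form $r_1\,dr_1\,d\theta_1 + r_2\,dr_2\,d\theta_2$. By the symmetry of the $(a_v,a_w)$ assignment above, this map also intertwines $V^{(v)}$ with $V^{(w)}$. The glued manifold is diffeomorphic to $X_\Gamma$, inherits a global symplectic form $\omega$ for which each $C_v$ is symplectic of area $B_v$ and intersections are $\omega$-orthogonal (since $C_v = \{r_2=0\}$ and $C_w=\{r_1=0\}$ in the chart), and carries a global Liouville field $V$ on the complement of $\bigcup_v(C_v\cup\F^{(v)})$. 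But each plumbing fiber in $\F^{(v)}$ is identified with a disk in the adjacent $C_w$, so this locus is exactly $\bigcup_v C_v$, yielding the first two bullets of the theorem.

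The main obstacle is the third bullet: assembling the local convex neighborhoods furnished by Lemma \ref{basiclemma} into a globally convex neighborhood. In each plumbing chart the lemma produces, on each side, arbitrarily small $V$-convex neighborhoods of $C_v \cup D_{x_{v,e}}$ (respectively $C_w\cup D_{x_{w,e}}$). Since $V^{(v)}$ and $V^{(w)}$ coincide in this chart and are symmetric in $(r_1,r_2)$, I would choose both local pieces symmetrically---for instance as $\{\max(r_1,r_2)\leq \rho\}$ with corners smoothed, or as a sublevel set of $r_1^2+r_2^2$---so that they patch automatically into a single region with $V$ transverse to its smooth boundary. Shrinking $\rho$ then yields arbitrarily small convex neighborhoods of $\bigcup_v C_v$ inside $X_\Gamma$, completing the proof modulo the deferred last bullet of Lemma \ref{basiclemma}.
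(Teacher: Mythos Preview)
Your proposal is correct and follows essentially the same approach as the paper. Your substitution $A^{(v)}=m_va_v$ is exactly the change of variables implicit in the paper's system $-\pi\sum_j Q_{ij}A_j/m_j=B_i$, and your assignment $A_i=A^{(w)}$, $n_i=m_w$ at edge points (with $A_i=0$ at the spare points) is precisely the paper's ``roles of the primes exchanged'' prescription; both then invoke condition~(iii) for solvability and observe that the plumbing swap intertwines the local Liouville fields. The only substantive difference is your third paragraph: the paper simply defers the convexity bullet to the Lefschetz-fibration construction of the next section, whereas you sketch a direct patching of the local convex neighborhoods via symmetric regions in the plumbing chart---this is plausible but would need a bit more care to ensure the symmetric pieces match the (deferred) convex neighborhoods away from the chart, and you rightly flag the dependence on the deferred last bullet of Lemma~\ref{basiclemma}. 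One small omission: you should note, as the paper does, that the same $\delta$ is used in every application of Lemma~\ref{basiclemma} so that the Darboux charts have compatible radii for the gluing.
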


\begin{proof} Consider first the situation of constructing the symplectic plumbing of two disk bundles of degree $-m'$ and $-m''$ over two surfaces $C',C''$, with $m', m'' \geq 1$. Choose positive numbers $A',A''$; then two applications of the Lemma \ref{basiclemma} (with the roles of the primes exchanged, and also using the same value of $\delta$---see below) provides us with two symplectic disk bundles $D',D''$ over $C',C''$, and a marked point $x',x''$ on each. (If $m'>1$ or $m''>1$, we take the quantities $A_i$ associated to the remaining marked points to equal 0.) Moreover, the local trivializations of $D',D''$ near $x',x''$ provided by the lemma show that the plumbing diffeomorphism exchanging the two factors in the 
trivializations is a symplectomorphism intertwining the two Liouville fields $V',V''$ given by the lemma. Hence the lemma gives us the desired structure, so long as we can choose $A',A''$ such that 
\begin{eqnarray*}
\pi(A' - A''/m'') &=& B' \\
\pi(A'' - A'/m') &=& B'',
\end{eqnarray*}
where $B',B''$ are the specified symplectic areas of $C',C''$.

More generally, we wish to find constants $A_v > 0$ for each vertex $v$, with the following property. Fix a vertex $v_0$ and let $v_1,\ldots,v_d$ be the adjacent vertices (repeated, if more than one edge connects them with $v_0$). Then we must arrange 
\[
\pi(A_{v_0} - A_{v_1}/m_{v_1} - \cdots -A_{v_d}/m_{v_d}) = B_{v_0}.
\]
Equivalently, in terms of the intersection matrix $Q = (Q_{ij})$ we need a solution in positive real numbers $A_j$ of the system
\[
-\pi\sum_j Q_{ij}A_j/m_j = B_i,
\]
for arbitrary positive $B_i$. The existence of such a solution follows from condition (iii) on $\Gamma$ above. Moreover, we can suppose that the value of $\delta$ used in each application of Lemma \ref{basiclemma} is the same: we arrange that $2m_v\pi\delta < B_v $ for each $v$. Note that condition (ii) ensures that the lemma provides sufficiently many special points on each surface $C_v$ at which to perform the plumbing. 

Again we defer the proof of the final claim of the theorem. 
\end{proof}

\begin{remark} Note that $\Gamma$ need not be a tree in the above theorem. We can also allow $\Gamma$ to contain loops (edges from a vertex to itself), corresponding to immersed curves, at the cost of more restrictive hypotheses on the weights $m_v$.
\end{remark}

\begin{example}
Let $X_\Gamma$ be the plumbing of two disk bundles of degree $-1$ over $S^2$. Then (ii) is satisfied, but (iii) fails (for all $\B$). Correspondingly, there is no symplectic structure on $X_\Gamma$ with convex boundary: here $\partial X_\Gamma\cong S^1\times S^2$, and $X_\Gamma$ is a (smooth) blowup of $S^2\times D^2$. It follows from a result of Eliashberg \cite{eliash1} that $X_\Gamma$ cannot have convex boundary.
\end{example}

To expand on the negativity requirement on $\Gamma$ given in condition (iii) before the theorem, we have the following lemma.

\begin{lemma}\label{deflemma} Suppose $\Gamma$ is a connected plumbing graph with vertices having weights $-m_v$ and valences $d_v$, and assume $m_v\geq d_v$ for each $v$. Then the following are equivalent:
\begin{enumerate}
\item For every positive vector $\B$ there is a positive vector $\A$ such that $-Q_\Gamma \A = \B$.
\item For some positive vector $\B$ there is a positive vector $\A$ such that $-Q_\Gamma \A = \B$.
\item For some vertex $v$, we have strict inequality $m_v > d_v$.
\item $Q_\Gamma$ is negative definite.
\end{enumerate}
\end{lemma}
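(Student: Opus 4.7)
The plan is to prove the cyclic implications $(1) \Rightarrow (2) \Rightarrow (3) \Rightarrow (4) \Rightarrow (1)$. The step $(1) \Rightarrow (2)$ is immediate. For $(2) \Rightarrow (3)$ I would argue by contrapositive: if $m_v = d_v$ for every vertex, then every row of $-Q_\Gamma$ sums to zero, so $(-Q_\Gamma)\mathbf{1} = 0$. Pairing the equation $-Q_\Gamma \A = \B$ with the all-ones vector $\mathbf{1}$ and using symmetry of $Q_\Gamma$ then gives $\mathbf{1}^T \B = \mathbf{1}^T (-Q_\Gamma)\A = ((-Q_\Gamma)\mathbf{1})^T\A = 0$, contradicting positivity of $\B$.

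For $(3) \Rightarrow (4)$ I would give a direct quadratic-form argument. For $\mathbf{x} \in \arr^{|\Gamma|}$,
\[
\mathbf{x}^T(-Q_\Gamma)\mathbf{x} \;=\; \sum_{v} m_v x_v^2 \;-\; 2\sum_{e=\{u,v\}} x_u x_v,
\]
where the edge sum is taken with multiplicity (hypothesis (i) that $\Gamma$ has no loops is essential in writing the cross terms this way). Applying $2 x_u x_v \leq x_u^2 + x_v^2$ edge by edge, and noting that each vertex $v$ is an endpoint of exactly $d_v$ edges, yields
\[
\mathbf{x}^T(-Q_\Gamma)\mathbf{x} \;\geq\; \sum_v (m_v - d_v)\, x_v^2 \;\geq\; 0.
\]
Equality at the first step forces $x_u = x_v$ on every edge; by connectedness of $\Gamma$ this makes all coordinates of $\mathbf{x}$ equal. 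Equality at the second step then forces $x_v = 0$ at any vertex where $m_v > d_v$, so by (3) we conclude $\mathbf{x} = 0$. Hence $-Q_\Gamma$ is positive definite, i.e., $Q_\Gamma$ is negative definite.

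For $(4) \Rightarrow (1)$, negative-definiteness of $Q_\Gamma$ makes $-Q_\Gamma$ invertible, so $\A := (-Q_\Gamma)^{-1}\B$ exists uniquely; the work is to show positivity. I would use a minimum-principle argument: let $A_j = \min_i A_i$ and read off the $j$-th coordinate of $-Q_\Gamma\A = \B$, which asserts $m_j A_j - \sum_{k \sim j} A_k = B_j > 0$, with the neighbor sum taken with multiplicity. If $A_j \leq 0$, then $A_k \geq A_j$ gives $\sum_{k \sim j} A_k \geq d_j A_j$, so $B_j \leq (m_j - d_j) A_j \leq 0$, contradicting $B_j > 0$. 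Therefore $A_j > 0$, and $\A$ is positive.

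The only step requiring real care is $(3) \Rightarrow (4)$: the argument depends on the fact that the hypothesis $m_v \geq d_v$ is precisely what is needed for the edge-by-edge AM-GM estimate to dominate the off-diagonal contribution, and connectedness of $\Gamma$ together with the strict inequality from (3) is exactly what is needed to upgrade the weak estimate to strict positive-definiteness. No deeper linear-algebra input (Perron--Frobenius, Stieltjes matrices) is needed.
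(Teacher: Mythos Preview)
Your proof is correct and follows the same cyclic scheme $(1)\Rightarrow(2)\Rightarrow(3)\Rightarrow(4)\Rightarrow(1)$ as the paper, with the $(3)\Rightarrow(4)$ step essentially identical (the paper writes the quadratic form via the identity $w_iw_j = \tfrac{1}{2}(w_i^2+w_j^2-(w_i-w_j)^2)$, which is just the algebraic unpacking of your edgewise AM--GM bound). Two of your implications differ in execution, however. For $(2)\Rightarrow(3)$ the paper argues that if every row sum vanishes then the $i$-th equation rearranges to $\sum_{j\neq i} q_{ij}(a_i-a_j)>0$, forcing some $a_j<a_i$ for every $i$, an impossibility; your pairing against $\mathbf{1}$ using the symmetry of $Q_\Gamma$ is shorter and more transparent. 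For $(4)\Rightarrow(1)$ the paper simply cites Gay--Stipsicz, whereas your minimum-principle argument is self-contained and makes explicit how the diagonal-dominance hypothesis $m_v\geq d_v$ enters this direction. Both of your alternatives buy a little extra clarity at no cost.
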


\begin{proof} Clearly (1) implies (2). Suppose $\A = (a_1,\ldots, a_n)$ is a positive vector such that every entry of $-Q_\Gamma\A$ is positive, and suppose contrary to (3) that every vertex of $\Gamma$ satisfies $m_v = d_v$. Equivalently, if $Q_\Gamma = (q_{ij})$, this says $\sum_j q_{ij} = 0$ for each $i$. Then for each $i$, we have
\begin{eqnarray*}
0 &<& -(Q_\Gamma\A)_i = -\sum_j q_{ij}a_j\\
&=& -\left( a_i(-\sum_{j\neq i}q_{ij}) + \sum_{j\neq i} q_{ij}a_j\right)\\
&=& \sum_{j\neq i} q_{ij}(a_i - a_j).
\end{eqnarray*}
Since $q_{ij}\geq 0$ for each $j\neq i$, this means that for each $i$ there exists $j$ such that $a_j < a_i$, which is absurd.

To see (3) implies (4), consider an arbitrary nonzero vector $\ww  = (w_1,\ldots,w_n)^T$. Let us write $s_i = \sum_j q_{ij} \geq 0$ for the row sums. Then
\begin{eqnarray*}
\ww^T Q_\Gamma\ww &=& \sum_{i,j} q_{ij} w_iw_j\\
&=& \frac{1}{2}\sum_{i,j} q_{ij}(w_i^2 + w_j^2 - (w_i - w_j)^2)\\
&=& \sum_{i,j} q_{ij} w_i^2 - \sum_{i<j} q_{ij}(w_i - w_j)^2\\
&=& \sum_i s_i w_i^2 - \sum_{i<j}q_{ij}(w_i - w_j)^2
\end{eqnarray*}
Since $s_i\leq 0$ and $q_{ij}\geq 0$ for $i\neq j$, the last expression is the sum of two non-positive terms. If the second term vanishes then $w_i = w_j$ for all $i\neq j$ with $q_{ij} \neq 0$. Since $\Gamma$ is connected, for any $i\neq j$ we can find a sequence $i = i_1, i_2, \ldots, i_m = j$ such that $q_{i_ki_{k+1}} \neq 0$ for each $k$, and therefore $w_{i} = w_{i_2} = \cdots = w_{i_m} = w_j$.  Since $i$ and $j$ were arbitrary we infer that $w_i = w_j$ for each $i,j$, but in this case the first term above is strictly negative as soon as some $s_i < 0$.

Finally, that (4) implies (1) was proved by Gay and Stipsicz (\cite{gaystip}, Lemma 3.3).

\end{proof}

\section{Lefschetz Fibrations}

We now show that a plumbing as in the previous section admits the structure of a Lefschetz fibration that is suitably compatible with the symplectic structure obtained above. 

\begin{theorem}\label{thm2} Let $X_\Gamma$ be a plumbing where $\Gamma$ satisfies (i), (ii) and (iii) above, equipped with the symplectic structure given by Theorem \ref{convthm}. Then there is a smooth map $\pi: X_\Gamma\to D^2$ with the following properties:
\begin{itemize}
\item $\pi$ is a Lefschetz fibration with bounded fibers, and the generic fiber of $\pi$ is diffeomorphic to the connected sum $\#_{v\in \Gamma}(C_v \#^{m_v - d_v} D^2)$

\item All singularities of $\pi$ lie in the fiber $\pi^{-1}(0)$. All regular fibers are properly embedded symplectic submanifolds, and $\pi^{-1}(0)$ is an immersed symplectic submanifold with only ordinary double points.
\item The configuration $\C = \bigcup_{v\in\Gamma} C_v$ of surfaces lies in $\pi^{-1}(0)$.
\item The Liouville field $V$ is outwardly transverse to $\partial\pi^{-1}(D^2(\epsilon))$ for each $\epsilon>0$, in particular transverse to the boundary of any ``nicely fibered'' neighborhood of $\C$.
\item The contact structure on $\partial X_\Gamma$ induced by $V$ is supported by the natural open book determined by $\pi$.
\end{itemize}
\end{theorem}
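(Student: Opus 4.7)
The plan is to construct $\pi$ by pasting together local models that agree with the standard Lefschetz form $z_1z_2$ in the Darboux charts furnished by Lemma~\ref{basiclemma}. On each disk bundle $D_v\to C_v$ of degree $-m_v$, I view $D_v$ as the unit disk bundle in a complex line bundle $L_v$ and fix a smooth section $\sigma_v$ of $L_v^{*}$ whose zero set is exactly the $m_v$ marked points of $C_v$; in the Darboux trivialization near each marked point, $\sigma_v$ can be taken equal to the base coordinate $z_1=r_1e^{i\theta_1}$. Then $\pi_v(w):=\sigma_v(p(w))\cdot w$ reads as $\pi_v=z_1z_2$ in the local chart, where $z_2=r_2e^{i\theta_2}$ is the fiber coordinate. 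The plumbing diffeomorphism $\phi$ swaps the base and fiber pairs, so $z_1z_2=z_1'z_2'$, and the $\pi_v$ assemble into a single global map $\pi:X_\Gamma\to\cee$, taking values in $D^2$ after a global rescaling.

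One then reads off the Lefschetz and fiber structure directly. The critical set of $\pi$ is exactly the collection of intersections $C_v\cap D_{x_i^v}$, all in $\pi^{-1}(0)$, and the local form $z_1z_2$ identifies each as a nondegenerate Lefschetz node. The fiber $\pi^{-1}(0)$ is $\bigcup_vC_v\cup\bigcup_{\text{extra }x_i^v}D_{x_i^v}$, whose closed components are precisely $\C$. For $0<|t|$ small, $\pi^{-1}(t)\cap D_v=\{p\in C_v\colon|\sigma_v(p)|\geq|t|/\sqrt{\delta}\}$ is $C_v$ with a small disk removed at each marked point, and the plumbing identifications glue these $d_v$ disk boundaries pairwise to the corresponding ones on neighboring $C_{v'}$, producing the generic fiber $\#_v(C_v\#^{m_v-d_v}D^2)$. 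That regular fibers are symplectic and $\pi^{-1}(0)$ has at worst ordinary double points follows from the compatibility of the model $\pi=z_1z_2$ with the Darboux form.

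For the transversality claim, I compute in the Darboux chart around a marked point that $|\pi|^2=r_1^2r_2^2$ and, using the explicit formula for $V$,
\[
V(|\pi|^2)=2r_1^2r_2^2+(A_i/n_i)r_2^2+(A/m)r_1^2,
\]
which is strictly positive on $\{r_1r_2=\epsilon\}$ for any $\epsilon>0$. In the bulk region, where $\pi=\sigma_v(p)\,z_2$, the analogous calculation yields $V(|\pi|^2)=|\sigma_v|^2(r_2^2+A/m)+X(|\sigma_v|^2)r_2^2$, which remains positive for small $\epsilon$ provided the section $\sigma_v$ and the surface Liouville field $X$ from Lemma~\ref{basiclemma} are arranged compatibly (for instance, so that $|\sigma_v|^2+X(|\sigma_v|^2)\geq0$ globally on $C_v$, which I verify holds near each marked point). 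I expect this transversality step, and in particular the interpolation between the Lefschetz chart, the bulk formula, and the twisted $Z_1$-chart near the $x_0$-point, to be the main technical obstacle.

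Finally, the map $\arg\pi$ defines an open book structure on $\partial\pi^{-1}(D^2(\epsilon))$ with pages $\Sigma_\theta=\pi^{-1}(\epsilon e^{i\theta})\cap X_\Gamma$ and binding $\pi^{-1}(0)\cap\partial X_\Gamma$; the standard description of the monodromy of a Lefschetz fibration as a product of right-handed Dehn twists along the vanishing cycles---one neck per marked point---recovers $\tau$. To show that $\xi=\ker(\iota_V\omega|_\partial)$ is supported by this open book, I would verify the Giroux conditions pointwise in the Darboux charts: $d(\iota_V\omega)=\omega$ restricts to a positive area form on each page since the pages are symplectic submanifolds of $(X_\Gamma,\omega)$, and $\iota_V\omega$ is positive on each binding circle by direct calculation using the explicit formulas for $V$ and $\omega$. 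Giroux's correspondence then identifies $\xi$ with the contact structure supported by $(\Sigma,\tau)$.
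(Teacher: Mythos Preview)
Your high-level strategy matches the paper's: build $\pi$ so that it equals the standard node $z_1z_2$ in the Darboux charts of Lemma~\ref{basiclemma}, observe that this form is symmetric under the plumbing involution so the local maps glue, and then verify symplecticity of fibers, transversality of $V$, and the Giroux conditions. The genuine difference is in how you extend $\pi$ away from the marked points. You propose a global definition $\pi_v(w)=\sigma_v(p(w))\cdot w$ via a section of $L_v^{*}$ vanishing at the marked points; the paper instead introduces a radial cutoff $\mu$ and an $S^1$-valued angle function $\theta$ on $C_v$ extending $\theta_1$ (resp.\ $m\theta_1$) from the $W_i$ (resp.\ $W_0$), and sets $\pi_0(p,r_2,\theta_2)=(\mu(r_2/\sqrt{\delta}),\,\theta(p)+\theta_2)$ in the bulk.

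The practical payoff of the paper's choice is exactly the point you flag as your ``main technical obstacle.'' Because in the bulk the radial coordinate of $\pi$ is $\mu(r_2/\sqrt{\delta})$, the level set $\{|\pi|=\epsilon\}$ is simply $\{r_2=\mathrm{const}\}$ there, and transversality of $V=X+\tfrac12(r_2+A/mr_2)\partial_{r_2}$ is immediate with no constraint on $X$ or on any auxiliary section. Your formula $V(|\pi|^2)=|\sigma_v|^2(r_2^2+A/m)+X(|\sigma_v|^2)\,r_2^2$ is correct and is in fact positive for small $\epsilon$ without any hypothesis like $|\sigma_v|^2+X(|\sigma_v|^2)\ge0$ (the $|\sigma_v|^2\,A/m$ term dominates once $r_2$ is small and $|\sigma_v|$ is bounded below on the bulk), so your worry there is milder than you suggest. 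The cutoff also cleanly handles the $Z_1$-chart near $x_0$: the paper defines $\pi_1=(\mu(r_2/\sqrt{\delta}),\theta_2)$ on $Z_1$ and checks $\pi_1=\pi_0\circ\phi$ directly, whereas in your setup you would need to track how $\sigma_v$ and the pairing behave through the nonlinear gluing $\phi$ of Lemma~\ref{basiclemma} (recall the paper notes that $\phi$ does \emph{not} respect the obvious disk-bundle projections). This is the one place where your plan is genuinely underspecified; it can be fixed, but the paper's explicit local formulas avoid having to say anything further.

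In short: your section-based construction is a legitimate alternative and your local computations near singular points agree with the paper's. The paper's cutoff-and-angle construction is less elegant but buys you the bulk transversality and the $x_0$-gluing for free, which is why those steps occupy one line there rather than a paragraph.
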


\begin{proof}

We work in the local models developed in the proof of Lemma \ref{basiclemma}: fix a surface $C$ and disk bundle $D\to C$ of degree $-m$, and marked points $x_0,x_1,\ldots, x_m$. Choose a nonnegative increasing function $\mu: [0,1]\to [0,1]$ satisfying
\begin{itemize}
\item $\mu(r) = r$ for $r\leq 1/3$
\item $\mu(r) = 1$ for $r \geq 2/3$.
\end{itemize}
We define the Lefschetz fibration first over $Z_0 = D - D_{x_0}$. Consider a Darboux trivialization of $D$ over the neighborhood $W_i$ of marked point $x_i$, $i>0$. For the constant $\delta$ chosen in Lemma \ref{basiclemma}, define
\[
\pi_0(r_1,\theta_1,r_2,\theta_2) = (\mu(r_1/\sqrt{\delta})\,\mu(r_2/\sqrt{\delta}), \theta_1 + \theta_2),
\]
in polar coordinates on $D^2$. Thus on $D|_{W_i}$, $\pi_0$ is a perturbation of the standard Lefschetz singularity. On $D|_{W_0} = (\sqrt{A-m\delta}, \sqrt{A})\times S^1 \times D^2(\sqrt{\delta})$, set
\[
\pi_0(r_1,\theta_1, r_2,\theta_2) = (\mu(r_2/\sqrt{\delta}), m\theta_1 + \theta_2).
\]
To extend $\pi_0$ over the rest of $Z_0$, observe that we have a function $\theta: \bigcup_{i\geq 0} (W_i - x_i) \to S^1$ given by $m\theta_1$ on $W_0$ and by $\theta_1$ on $W_i$, $i>0$. Our orientation convention for $W_0$ shows that this function has degree 0 on the boundary $\partial (C - \bigcup_{i\geq 0}W_i)$, hence extends over the rest of $C$. Now for $p\in C - \bigcup W_i$ we define 
\[
\pi_0(p, r_2,\theta_2) = (\mu(r_2/\sqrt{\delta}), \theta(p) + \theta_2).
\]
This gives a well-defined map $\pi_0: D|_{C - x_0}\to D^2$, and it is straightforward to see that every $y \in D^2(1/9)- 0$ is a regular value.

For $p \in Z_1 = D^2(\sqrt{m\delta}) \times D^2(\sqrt{\delta})$ we set $\pi_1(p) = (\mu(r_2/\sqrt{\delta}), \theta_2)$. Then it is easy to see that $\pi_1 = \pi_0\circ \phi$ where $\phi$ is the gluing map $T\subset Z_1\to Z_0$ from previously, hence we obtain a smooth function $\pi: D\to D^2$. Now replace $D$ by the smaller subset $\pi^{-1}(D^2(1/9))$. Since $C$ obviously lies in $\pi^{-1}(0)$, this replacement is diffeomorphic to $D$. Moreover, $\pi$ clearly restricts as a Lefschetz fibration having $m$ singularities; the unique singular fiber is $\pi^{-1}(0)$. The latter is clearly given by $C \cup D_{x_1}\cup\cdots\cup D_{x_m}$. It is also easy to see that a nonsingular fiber is symplectic, from the local forms over each $W_i$.

Now we form the plumbing as in Theorem \ref{convthm}: since $\pi_{W_i}$ is invariant under the involution used in the plumbing construction, we see that the Lefschetz fibrations constructed on each disk bundle glue together to give a Lefschetz fibration $\pi: X_{\Gamma}\to D^2(1/9)$. (As before, we arrange to use the same value of $\delta$ in Lemma \ref{basiclemma} for each surface $C_v$ in the plumbing.)

To understand the generic fiber of $\pi$, recall that one can obtain the generic fiber of a Lefschetz fibration from any singular fiber by smoothing the ordinary double points appearing in the latter. In our case the singular fiber in one of the disk bundles $D\to C_v$ is the union of $C_v$ with $m_v$ normal disks; of the latter $d_v$ are identified in the plumbing with disks lying in other surfaces $C_{v'}$. This easily gives the description of the general fiber of $\pi$ in the statement of the theorem. 

Finally, to see that the contact structure on $\partial X_\Gamma$ induced by $V$ is supported by the open book induced by $\pi$, recall that for a contact structure $\xi =\ker\alpha$ to be supported by an open book means two things: the differential $d\alpha$ restricts as a positive area form to the pages, and the binding is positively transverse to $\xi$. The first of these follows in our case since the fibers of $\pi$ are symplectic, and the second from a quick check in the local models near singular points.
\end{proof}

Note that Theorem \ref{thm2} provides a proof of the missing parts of Lemma \ref{basiclemma} and Theorem \ref{convthm} regarding convexity, though strictly one must shrink the fibered neighborhoods $\pi^{-1}(\epsilon)$ near the special points of $C_v$ that were not used in the plumbing construction.


\begin{thebibliography}{99}

\bibitem{akboz} S.\ Akbulut and B.\ Ozbagci,
``Lefschetz fibrations on compact Stein surfaces,'' 
{\it Geom. Topol.} {\bf 5} (2001), 319--334
\bibitem{baldkirk} S.\ Baldridge and P.\ Kirk, ``A symplectic manifold homeomorphic but not diffeomorphic to $\cee P^2 \# 3\overline{\cee P}^2$,'' {\it Geom. Topol.} {\bf 12} (2008), no. 2, 919--940.
\bibitem{eliash1} Y.\ Eliashberg, ``Filling by holomorphic discs and its applications,'' {\it Geometry of low-dimensional manifolds, 2 (Durham, 1989),} 45--67, London Math. Soc. Lecture Note Ser., 151, Cambridge Univ. Press, Cambridge, 1990. 
\bibitem{eliash2} Y.\ Eliashberg, ``Topological characterization of Stein manifolds of dimension $>2$,'' {\it Internat. J. Math.} {\bf 1} (1990), no. 1, 29--46.
\bibitem{EMV}  H.\ Endo, T.\ Mark, and J.\ Van Horn-Morris, ``Monodromy substitutions and rational blowdowns,'' {\it J. Topology} {\bf 4} (2011) 227--253.
\bibitem{gaystip} D.\ T.\ Gay and A.\ Stipsicz, ``Symplectic surgeries and normal surface singularities,'' {\it Algebr. Geom. Topol.} {\bf 9} (2009), no. 4, 2203--2223.
\bibitem{gompf} R.\ E.\ Gompf, ``Handlebody construction of Stein surfaces,'' {\it Ann. of Math. (2)} {\bf 148} (1998), no. 2, 619--693.
\bibitem{honda} K. Honda, ``On the classification of tight contact structures, I,'' {\it Geom. Topol.} {\bf 4} (2000), 309--368.
\bibitem{KO} M. Korkmaz and B. Ozbagci, ``On sections of elliptic fibrations,'' {\it Michigan Math. J.} {\bf 56} (2008), no. 1, 77--87.
\bibitem{loipier} A.\ Loi and R.\ Piergallini, ``Compact Stein surfaces with boundary as branched covers of $B^4$,'' 
{\it Invent. Math.} {\bf 143} (2001), no. 2, 325--348.
\bibitem{mcrae} A.\ S.\ McRae, ``Symplectic geometry for pairs of submanifolds,'' {\it Rocky Mountain J. Math.} {\bf 35} (2005), no. 5, 1755--1764.
\bibitem{SSP} J.\ Park, A.\ Stipsicz, and Z.\ Szab\'o, ``Exotic smooth structures on $\cee P^2 \# 5\overline{\cee P}^2$,'' {\it Math. Res. Lett.} {\bf 12} (2005), no. 5-6, 701--712.
\bibitem{jongil} J.\ Park, ``Simply connected symplectic 4-manifolds with $b^+_2=1$ and $c^2_1=2$,'' {\it Invent. Math.} {\bf 159} (2005), no. 3, 657--667.
\bibitem{SSW} A. Stipsicz, Z. Szab\'o and J. Wahl, ``Rational blowdowns and smoothings of surface singularities,'' {\it J. Topol.} {\bf 1} (2008), no. 2, 477-517.
\bibitem{weinstein}  A.\ Weinstein, ``Contact surgery and symplectic handlebodies,'' {\it Hokkaido Math. J.} {\bf 20} (1991), no. 2, 241--251.


















\end{thebibliography}
\end{document}